\documentclass {article}
\usepackage{authblk}
\usepackage{etex}
\usepackage[utf8]{inputenc}
\usepackage[english]{babel}
\usepackage{amsmath}
\usepackage{amsthm}
\usepackage{amssymb}
\usepackage{sectsty}
\usepackage{titlesec}
\usepackage{color}
\usepackage[color,matrix,arrow]{xy}
\usepackage{amsgen}
\usepackage{amstext}
\usepackage{amsbsy}
\usepackage{amsopn}
\usepackage{amsfonts}
\usepackage{eepic}
\usepackage{graphicx}
\usepackage{epsf}
\usepackage{pstricks}
\usepackage{enumerate}
\usepackage{eqnarray}
\usepackage{faktor}
\xyoption{all}

\usepackage{pgf}
\usepackage{tikz-cd}
\usetikzlibrary{automata, arrows.meta, positioning,decorations.pathmorphing}

\newcommand{\footrecall}[1]{%
} 
\usetikzlibrary{snakes,shapes,arrows,automata}

\titleformat*{\section}{\large\bfseries}
\titleformat*{\subsection}{\normalsize \bfseries}

\newcommand{\Cay}{\operatorname{Cay}}
\newcommand{\Geo}{\operatorname{Geo}}

\theoremstyle{definition}
\newtheorem{theorem}{Theorem}[section]
\newtheorem{corollary}[theorem]{Corollary}

\newtheorem{proposition}[theorem]{Proposition}
\newtheorem{question}[theorem]{Question}

\newtheorem{conjecture}[theorem]{Conjecture}
\newtheorem{lemma}[theorem]{Lemma}

\begin{document}

\title{A  non-plain virtually free group with a strictly $2$-geodetic Cayley graph}
\author{Andr\'e Carvalho\\
	\small Research Center in Mathematics and Applications (CIMA)\\
	\small Department of Mathematics, School of Sciences and Technology, University of \'Evora\\
	\small Rua Rom\~ao Ramalho, 59, 7000--671 \'Evora, Portugal\\
	\small \texttt{andrecruzcarvalho@gmail.com}}
\date{}
\maketitle
\begin{abstract}
	We construct an explicit infinite virtually free group
	$G=C_4\ast_{C_2}D_3$
	with a finite inverse-closed generating set whose Cayley graph is $2$-geodetic but not geodetic. More precisely, we describe the full language of geodesics and show that every element of $G$ has a unique geodesic representative except for a single element, which has exactly two. This gives a counterexample to the conjecture that an infinite group admitting a $k$-geodetic Cayley graph must admit a geodetic Cayley graph with respect to the same generating set. We further prove that $G$ is freely indecomposable and hence non-plain. Consequently, Shapiro's conjecture that every geodetic group is plain and the conjecture that every hyperbolic $k$-geodetic group is geodetic cannot both hold.
\end{abstract}
\section{Introduction}

A connected graph is called $k$-geodetic if every pair of vertices is
joined by at most $k$ geodesics and it is strictly $k$-geodetic but not $(k-1)$-geodetic.  A group is called $k$-geodetic if
it has a finite inverse-closed generating set whose Cayley graph is
$k$-geodetic. A group is said to be geodetic if it is $1$-geodetic.

A group is said to be plain if it is a free product of finitely many
finite groups and finitely many infinite cyclic groups. Shapiro asked whether the geodetic groups are exactly the plain groups
\cite{Shapiro}.  Madlener and Otto  conjectured that the groups that admit presentation by finite confluent length-reducing rewriting systems are exactly the plain groups \cite{MO}.

Papasoglu proved that hyperbolic geodetic groups must be virtually free \cite{Papasoglu}, but it is not known if all geodetic groups are hyperbolic nor if virtually free geodetic groups must be plain.
Elder, Piggott and Townsend proved an analogue to Papasoglu's result, that is, that every hyperbolic $k$-geodetic
group is virtually free \cite[Theorem B]{EPT}.  They also made the
following conjecture \cite[Conjecture C]{EPT}:

\begin{conjecture}\label{conjectureelderetal}
	A hyperbolic group is $k$-geodetic for some $k$ if and only if
	it is geodetic.  Furthermore, if $G$ is infinite and
	$\Cay(G,S)$ is $k$-geodetic for some finite generating set $S$,
	then $\Cay(G,S)$ is geodetic.
\end{conjecture}

 We give a counterexample to the second statement. Concretely, we show that 
 	$$
 G=\left\langle r,t\ \middle|\ r^4=t^2=(r^2t)^3=1\right\rangle\cong C_4\ast_{C_2} D_3
 $$
 is infinite, virtually free, and, if $
 S=\{r,r^{-1},r^2,t\}, 
 $
  then
 $
 \Cay(G,S)
 $
 is $2$-geodetic but not geodetic. More than that, there is only one geodesic between $1$ and $g$ for every $g\neq r^2tr^2$ and the element $r^2tr^2$ has exactly two geodesics. 
 Concretely, let
 $
 P=\{r,r^{-1},s\}=A\setminus\{1\},
 $
 and $L_0\subseteq X^*$ be the language consisting of the empty word together with the words $u$ satisfying:
 \begin{enumerate}[(i)]
 	\item $u$ alternates between letters of $P$ and the letter $t$;
 	\item  $s$ only appears as the first or last letter of $u$.
 \end{enumerate}
 
 We show that this language is an automatic structure with uniqueness for $G$ and that $L=L_0\cup \{tst\}$ is precisely the language of geodesics of $G$ with respect to $X$.
 
 \newtheorem*{main}{Theorem \ref{prop:D3-injective}}
 \begin{main}
 	$L_0$ is a geodesic automatic structure with uniqueness for $G$. Consequently, $\Geo_X(G)=L$. 
 \end{main}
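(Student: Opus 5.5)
The plan is to combine the normal form theorem for amalgamated free products (for uniqueness) with a length-non-increasing rewriting argument (for geodesicity). Throughout, $A=\langle r\rangle\cong C_4$, $s=r^2$, $D=\langle s,t\rangle\cong D_3$, $X=\{r,r^{-1},s,t\}$, and $G=A\ast_{\langle s\rangle}D$.

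\emph{Step 1: uniqueness of $L_0$-representatives.} Choose right transversals of $\langle s\rangle$: in $A$ take $\{1,r\}$, noting $r^{-1}=rs$; in $D$ take $\{1,t,ts\}$. By the normal form theorem, every $g\in G$ is uniquely $g=h\,c_1c_2\cdots c_n$ with $h\in\langle s\rangle$ and the $c_i$ nontrivial transversal elements alternating between the two factors. I will then give an explicit bijection between these normal forms and the words of $L_0$. The point is that inside an $L_0$ word each interior $A$-syllable is a single letter $r^{\pm1}$ and each $D$-syllable is a single $t$. The $\langle s\rangle$-part is pushed to the extremes using $st=tsts$-type identities: in $D_3$ we have $ts=st\cdot$(element of $\langle s\rangle$-coset), and $sr=r^{-1}$. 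This pushing is exactly what produces the choice between $r$ and $r^{-1}$ in each slot and the possible $s$ at the first or last position. I will check by counting that the leftover $\langle s\rangle$-coset data match the two allowed end-positions of $s$. This makes the evaluation map $L_0\to G$ bijective. The language $L_0$ is clearly regular, and I expect the fellow-traveller property to be routine: multiplying by a generator changes an $L_0$ word only in a bounded suffix.

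\emph{Step 2: every word rewrites to $L$ without growing.} Consider the length-non-increasing rewriting rules. The strictly decreasing ones are:
\[rr\to s,\quad r^{-1}r^{-1}\to s,\quad rr^{-1},\ r^{-1}r,\ ss,\ tt\to 1,\quad rs,\ sr\to r^{-1},\quad r^{-1}s,\ sr^{-1}\to r.\]
The only length-preserving rule is $sts\to tst$, valid since $(st)^3=1$. In addition, whenever a factor $x\,t\,s\,t\,y$ occurs with $x,y\in P$, I replace $tst$ by $sts$ and absorb the two $s$'s into $x$ and $y$, which strictly shortens the word by $2$. A word to which no strictly shortening rule applies alternates between $P$-letters and $t$. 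If $s$ occurs in the interior of such a word, then it sits as $t\,s\,t$; if this factor has a $P$-letter on either side, the word shortens, and otherwise the word is exactly $tst$. Hence every word not in $L$ can be rewritten to a word in $L$, with length strictly dropping unless the word is $sts$.

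\emph{Step 3: conclusion.} Each $L_0$ word $u$ is geodesic. Indeed, take a geodesic $w$ for the same element and rewrite it as in Step 2 into $L$ without increasing length. Since $\overline{tst}=\overline{sts}$, uniqueness from Step 1 then forces the result to be $u$ itself, or $tst$ when $u=sts$. So $|u|\le|w|$, and $L_0$ is a geodesic automatic structure with uniqueness. Conversely, a geodesic not in $L$ would admit a strict shortening by Step 2 unless it is $sts\in L_0$, a contradiction. Finally, $tst$ is a geodesic with $tst\ne sts$ as words, so $\Geo_X(G)=L$.

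I expect the main obstacle to be Step 2. There I must verify that no length-$|w|$ word other than $sts$ and $tst$ escapes both $L_0$ and the strictly shortening rules. This needs care at the word ends, for instance with $s$ adjacent to a $t$ at the boundary, as in $stst\ldots$. For those cases I would first try $stst=ts\cdot$... reductions via $(st)^3=1$, that is, $stst=ts$, which is again strictly shortening.
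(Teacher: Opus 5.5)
Your proposal is correct, and its skeleton is the paper's. Your Step 2 is Lemma~\ref{lem:D3-geodesics-in-L}. Your Step 3 is the same deduction: a cross-section that contains a geodesic for every element consists of geodesics. In Step 2, the rules you actually need are the one-sided $p\,tst\to(ps)\,ts$ and its mirror $tst\,p\to st\,(sp)$, not the two-sided $x\,tst\,y$ you listed. The factor $tst$ may sit at an end of the word, as in $rtst$ or $stst$.

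The real differences lie in uniqueness and automaticity.

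\emph{Uniqueness.} The paper fixes no transversals. It reads $L_0$ words as reduced expressions and applies Lemma~\ref{lem:comparison}, which forces every connecting element $c_i\in\{1,s\}$ to be trivial because $t,st,ts,sts$ are distinct. You instead convert right-transversal normal forms into $L_0$ words by pushing each interior $s$ into the next $A$-letter, via $ts\cdot r=t\cdot r^{-1}$. The identity doing the work is $sr=r^{-1}$; your ``$ts=st\cdot$(element of $\langle s\rangle$-coset)'' is not meaningful as written. This gives surjectivity without Lemma~\ref{lem:D3-geodesics-in-L}. Injectivity then follows from the explicit inverse conversion, or from your count, which is right. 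Evaluation preserves the syllable pattern, and a pattern with $d$ $D$-syllables has $2^{d+1}$ normal forms and $2^{d+1}$ words in $L_0$.

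\emph{Automaticity.} The paper relies on hyperbolicity and on $\Geo_X(G)$ being an automatic structure, then restricts to $L_0$. Your direct check avoids both citations and yields an explicit constant, but it still has to be carried out. It does work. The $L_0$-representative of $ux$ is obtained from $u$ by appending, deleting or changing the last letter, with one exception: $u$ ends in $s$ and $x=t$. In that case $w\,q\,t\,s\cdot t=w\,q^{-1}\,t\,s$ for $q\in\{r,r^{-1}\}$, and in addition $ts\cdot t=sts$ and $sts\cdot t=ts$. So the two words always share a prefix of length at least $|u|-3$, which gives the fellow-traveller property.
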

 
 As a corollary, we have that this Cayley graph is $2$-geodetic but not geodetic, providing a counterexample to the second statement of Conjecture \ref{conjectureelderetal}.
 
  \newtheorem*{maincor}{Corollary \ref{maincor}}
 \begin{maincor}
 	The Cayley graph $\Cay(G,X)$ is 2-geodetic but not geodetic.  The only element
 	with two geodesic representatives is $sts=tst$.
 \end{maincor}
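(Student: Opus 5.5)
Here $s=r^2$, $A=\langle r\rangle\cong C_4$, and $X=S=\{r,r^{-1},s,t\}$. The statement to prove is the corollary: $\Cay(G,X)$ is 2-geodetic, it is not geodetic, and $sts=tst$ is the only element with two geodesic representatives. I would derive everything from Theorem \ref{prop:D3-injective}, which says $L_0$ is a geodesic automatic structure with uniqueness and $\Geo_X(G)=L=L_0\cup\{tst\}$.

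The point is that geodesics from $1$ to $g$ correspond bijectively to geodesic words over $X$ representing $g$. That is, they correspond to words in $L$ evaluating to $g$. This uses the fact that distinct generators in $X$ are distinct group elements, which holds because $r$, $r^{-1}$, $s$, $t$ are pairwise distinct in $G$. By vertex-transitivity it is enough to count geodesics starting at $1$.

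\textbf{Key steps.}
\begin{enumerate}
\item Uniqueness in the automatic structure means each $g\in G$ is represented by exactly one word of $L_0$.
\item Since $L=L_0\cup\{tst\}$, every element has at most two geodesic words. This proves 2-geodeticity.
\item If $g\neq \overline{tst}$, the only words of $L$ evaluating to $g$ lie in $L_0$, so there is exactly one geodesic.
\item For $g=\overline{tst}$, I need two facts.
\begin{enumerate}
\item $tst\notin L_0$. This holds because $s$ appears in the middle, so condition (ii) fails.
\item $tst=sts$ in $G$. From $(st)^3=1$ and $s^2=t^2=1$ we get $stststs=\ldots$; more directly, $(st)^3=1$ gives $sts=t^{-1}s^{-1}t^{-1}=tst$.
\end{enumerate}
\item The word $sts$ alternates between letters of $P$ and $t$, with $s$ only at the first and last positions. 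So $sts\in L_0$, and it is the unique $L_0$-representative of this element.
\item Hence $\overline{tst}$ has exactly the two geodesic words $tst$ and $sts$. These are distinct words, so they give two distinct paths, and the graph is not geodetic.
\end{enumerate}

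\textbf{Main obstacle.} There is essentially none beyond invoking the theorem. The only care needed is the word-to-path correspondence, which requires the generators in $X$ to be pairwise distinct elements. I would check this directly. The elements $r$, $r^{-1}$, $s$ are distinct because $A\cong C_4$ embeds in the amalgam. Also $t\notin A$, since $t$ lies in the $D_3$ factor outside the amalgamated $C_2=\langle s\rangle$.
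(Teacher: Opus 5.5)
Your proposal is correct and follows the paper's route: the corollary is read off directly from Theorem \ref{prop:D3-injective}, because $L_0$ is a cross-section consisting of geodesics and $\Geo_X(G)=L_0\cup\{tst\}$, with $tst\notin L_0$ and $tst=sts\in L_0$. Your extra check that the letters of $X$ are pairwise distinct in $G$, so that geodesic words correspond bijectively to geodesic paths from $1$, is a detail the paper leaves implicit; the dangling fragment ``$stststs=\ldots$'' should be deleted in favour of your direct computation $sts=(tst)^{-1}=tst$.
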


 Interestingly, we also show that this group is not plain.  We do not know if there is some generating set $X$ such that $\Cay(G,X)$ is geodetic. 
 If there is, $G$ is a counterexample to Shapiro's question; if there is not, then $G$ is also counterexample to the first statement of Conjecture \ref{conjectureelderetal}.

 \newtheorem*{either}{Corollary \ref{cor:either}}
 \begin{either}
 	At least one of the following is false:
 	\begin{enumerate}
 		\item every geodetic group is plain;
 		\item every hyperbolic $k$-geodetic group is geodetic.
 	\end{enumerate}
 	Thus Shapiro's conjecture and the first assertion of Conjecture \ref{conjectureelderetal} cannot
 	both be true.
 \end{either}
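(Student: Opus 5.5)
The plan is to deduce the corollary from the two main results the paper is building up to: Theorem \ref{prop:D3-injective}, which says that $\Geo_X(G)=L=L_0\cup\{tst\}$ with $L_0$ a geodesic language with uniqueness, and the freeness/indecomposability result showing that $G$ is infinite, virtually free (hence hyperbolic) and freely indecomposable, so not plain. Here $X=S=\{r,r^{-1},s,t\}$ with $s=r^2$.

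First I establish that $G$ is non-plain. The group $G=C_4\ast_{C_2}D_3$ is an amalgam of finite groups over a proper subgroup of each factor. Hence it is infinite and virtually free; in particular it is hyperbolic. Since $C_2$ is a proper subgroup in both factors, it has at least two ends, and by Stallings it splits over a finite group. Plainness requires more: $G$ would have to be a free product of finite and infinite cyclic groups. For this step I invoke the paper's proof that $G$ is freely indecomposable. The most direct route I would attempt is a Kurosh/Grushko argument, or a count of conjugacy classes of finite subgroups and their orders. The key fact is that $G$ has torsion and is not finite, so a free decomposition would be nontrivial. Since $G$ is freely indecomposable, $G$ is not plain. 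Note also that the abelianization of $G$ is finite (compute it: $r^4=t^2=1$ and $(r^2t)^3=1$ give $t=1$ after abelianizing, since $r^6t^3=r^2t$), so $G$ has no $\mathbb Z$ free factor. This fits with the indecomposability argument.

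Next, $G$ is $2$-geodetic. By Theorem \ref{prop:D3-injective}, every $g\neq sts$ has a unique geodesic, while $sts=tst$ has exactly two. Hence $\Cay(G,X)$ is $2$-geodetic by Corollary \ref{maincor}. Thus $G$ is a hyperbolic $k$-geodetic group with $k=2$.

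Now suppose, for contradiction, that both statements hold. Statement (2) applied to $G$ gives a generating set $Y$ for which $\Cay(G,Y)$ is geodetic. Then statement (1) forces $G$ to be plain, contradicting the indecomposability result. Therefore at least one of the statements fails. The final sentence is then a restatement: Shapiro's question asserts (1), and the first assertion of Conjecture \ref{conjectureelderetal} asserts (2).

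The only substantive obstacle is the non-plainness of $G$, which rests on the freely-indecomposable result proved later in the paper. Everything else is formal.
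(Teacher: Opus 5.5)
Your proposal is correct and follows the paper's argument: $G$ is hyperbolic (being virtually free) and $2$-geodetic by Corollary \ref{maincor}, and it is freely indecomposable, infinite and has torsion, hence not plain by Proposition \ref{prop:two-connected}, so (1) and (2) together would force $G$ to be plain, a contradiction. One harmless slip in your side remark: abelianizing $(r^2t)^3=1$ gives $r^2t=1$, i.e.\ $t=r^2$ (the abelianization is $C_4$), not $t=1$; your conclusion that the abelianization is finite still holds, and the remark is not needed for the argument anyway.
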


Notice that $\Cay(G,X)$ is very close to being geodetic, in the sense that only one element has two geodesics, while all others have a single one. However, we show that it is not possible to add generators to our set and make the Cayley graph become geodetic.

 \newtheorem*{enlarge}{Proposition \ref{prop:no-geodetic-enlargement}}
\begin{enlarge}
	Let
	$
	S=\{r,r^{-1},t,s\}.
	$
	Suppose that $X$ is an inverse-closed generating set containing $S$
	and that $\operatorname{Cay}(G,X)$ is geodetic. Then
	$
	G\setminus\{1\}\subseteq X.
	$
	In particular, no finite inverse-closed generating set containing
	$S$ gives a geodetic Cayley graph.
\end{enlarge}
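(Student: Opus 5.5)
The plan is to argue by induction on the word length $|g|_S$ of $g\in G$ with respect to $S$, using one elementary rigidity property of geodetic Cayley graphs. \textbf{Key observation.} Suppose $\Cay(G,X)$ is geodetic and $g\neq 1$ admits two factorizations $g=a b=a' b'$ with $a,b,a',b'\in X\setminus\{1\}$ and $a\neq a'$. Then $g\in X$. Indeed, otherwise $|g|_X=2$, and the two distinct words $ab$ and $a'b'$ would be two geodesics from $1$ to $g$. More generally, if $g$ has two distinct $X$-words of length $m$ and $g\notin X$, then some shorter word represents $g$, and geodesicity forces strong constraints. I will use the length-$2$ version repeatedly.

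\textbf{Inductive step for $|g|_S\ge 3$.} Assume every $h$ with $1\le |h|_S\le n$ lies in $X$, where $n\ge 2$, and let $|g|_S=n+1$. Choose an $S$-geodesic $g=w_1w_2\cdots w_{n+1}$ and consider the two splittings
\[
g=(w_1)(w_2\cdots w_{n+1})=(w_1\cdots w_n)(w_{n+1}).
\]
All four factors are nontrivial, since they are subwords of a geodesic, and each has $S$-length at most $n$, so each lies in $X$. The first factors differ: $w_1\neq w_1\cdots w_n$, because $w_2\cdots w_n$ is a nonempty subword of a geodesic and hence nontrivial. By the key observation, $g\in X$. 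So everything reduces to proving that all elements of $S$-length $2$ lie in $X$; the elements of $S$-length $1$ lie in $X$ by hypothesis.

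\textbf{The base case $|g|_S=2$ (the main obstacle).} Here the factors in the splitting trick are single letters, and I need a second factorization through elements already known to be in $X$. I would exploit the relations $s=r^2=r^{-2}$, $rs=r^{-1}$, $t^2=s^2=1$, $sts=tst$ and $(st)^3=1$, and use the explicit description of $S$-geodesics from Theorem \ref{prop:D3-injective}: by Corollary \ref{maincor}, $tst=sts$ is the only element with two $S$-geodesics. The plan is as follows.
\begin{enumerate}
\item First treat the $D_3$ part. Consider $g=sts=tst$, which has the two distinct $X$-words $sts$ and $tst$ of length $3$. If $st\notin X$ and $ts\notin X$, I will derive that $g$ has at least two $X$-geodesics: either of length $3$, or of length $2$ via factorizations such as $g=(st)\cdot s$ and $g=t\cdot(st)$ once one of these products is known to be in $X$. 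This contradiction should force $st,ts\in X$, and then also $sts\in X$.
\item Next handle $rt,\,r^{-1}t,\,tr,\,tr^{-1}$ using the factorizations $rt=r^{-1}(st)$ and $tr=(ts)r^{-1}$. Each gives a second length-$2$ $X$-factorization with a different first factor, so the key observation puts these elements in $X$.
\end{enumerate}
The remaining $S$-length-$2$ elements are generated by the same relations, so a finite case check against the geodesic normal forms $L_0$ should finish the base case. I expect the delicate point to be step 1, where there is no a priori length-$2$ second factorization and one must play the two length-$3$ words against each other.

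Once $G\setminus\{1\}\subseteq X$ is established, the final claim follows: $G$ is infinite (Theorem \ref{prop:D3-injective}), so no finite $X\supseteq S$ can give a geodetic Cayley graph.
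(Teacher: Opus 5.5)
You have a genuine gap in Step 1, which is where the real content lies. Everything else works, and your reduction is a legitimate alternative to the paper's route. The paper inducts on normal-form syllable length and factors $g=u_1\cdot(u_2\cdots u_n s^\epsilon)=(u_1s)\cdot(su_2\cdots u_n s^\epsilon)$ through the amalgamated subgroup $C=\{1,s\}$. Your overlapping splitting $(w_1)(w_2\cdots w_{n+1})=(w_1\cdots w_n)(w_{n+1})$ is correct, more elementary, and works in any group once all elements of $S$-length at most $2$ lie in $X$. Step 2 is also correct. By the description of $\Geo_X(G)$, the elements of $S$-length $2$ are exactly $rt,r^{-1}t,tr,tr^{-1},st,ts$. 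So once $st,ts\in X$, your four factorizations finish the base case, and the ``remaining'' elements you allude to do not exist.

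The mechanism you sketch for Step 1 cannot work. Since $X$ is inverse-closed, $st\notin X$ is equivalent to $ts\notin X$. Under that assumption the factorizations $(st)\cdot s$ and $t\cdot(st)$ are unavailable, so the argument is circular.

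More seriously, if $q=sts$ lies in $X$, then the one-letter word $q$ is its unique $X$-geodesic. No contradiction can then be extracted from $q$, however you treat $st$. In the case $|q|_X=2$, you also have no second length-$2$ word to play against the first.

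Two ingredients are missing.
\begin{enumerate}
\item You must first prove $q\in X$. We have $|q|_X\le 3$. Length $3$ is excluded because $sts$ and $tst$ would be two geodesics. Length $2$ is excluded because $q$ is an involution: if $ab$ were its unique geodesic, then the reversed word $b^{-1}a^{-1}$ would be a geodesic for $q^{-1}=q$. Geodeticity then forces $a=b^{-1}$ and hence $q=1$. This is the paper's odd-length lemma for involutions.
\item With $q\in X$ in hand, the contradiction comes from $ts$, not from $q$. The relations $ts=sq=qt$ give three distinct $X$-words of length $2$, with first letters $t,s,q$. By your key observation, $ts\in X$, and then $st=(ts)^{-1}\in X$.
\end{enumerate}
Insert these two steps in place of your Step 1, and the rest of your argument goes through as written.
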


We end the paper with several open questions.

\section{Free products and normal forms}

Let $A$ and $B$ be groups with a common subgroup $C$.
The \emph{free product of $A$ and $B$ amalgamating $C$}, denoted
$
A *_C B,
$
is the quotient of the free product $A*B$ obtained by identifying the
two copies of $C$.

An expression
$
x_1x_2\cdots x_n
$
is called \emph{reduced} if
$$
x_i \in (A\setminus C)\cup(B\setminus C)
$$
for every $i$, and consecutive $x_i$ belong to different factors.

Choose left transversals $T_A$ and $T_B$ for $C$ in $A$ and $B$,
respectively, both containing $1$. By the normal form theorem for free
products with amalgamation \cite[Chapter IV, Theorem 2.6]{LS}, every element $g\in A*_C B$ has a unique
expression
$$
g=u_1u_2\cdots u_n c,
$$
where $c\in C$, 
$
u_i\in (T_A\setminus\{1\})\cup(T_B\setminus\{1\}),
$
and consecutive $u_i$ belong to different factors. We call
$u_1,\ldots,u_n$ the \emph{syllables} of the normal form. When $n=0$,
the normal form means simply $g=c\in C$.

We will also use the following technical lemmas, which follow from the normal forms.

\begin{lemma}\label{lem:comparison}
	Let $G=A\mathbin{*}_H B$.  Suppose
	$$
	x_1\cdots x_m
	\qquad\text{and}\qquad
	y_1\cdots y_\ell
	$$
	are reduced expressions with the same value in $G$.  Then $m=\ell$, and the
	$i$th syllables $x_i,y_i$ belong to the same free factor for every $i$.  Moreover,
	there are $c_1,\ldots,c_{m-1}\in H$ such that
	\begin{align*}
		y_1&=x_1c_1,\\
		y_i&=c_{i-1}^{-1}x_ic_i\qquad(2\le i\le m-1),\\
		y_m&=c_{m-1}^{-1}x_m.
	\end{align*}
	When $m=1$, equality simply means $x_1=y_1$.
\end{lemma}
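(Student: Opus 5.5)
Both assertions will follow from the normal form theorem, applied one syllable at a time to the equation $x_1\cdots x_m=y_1\cdots y_\ell$. Rewrite it as
$$
y_\ell^{-1}\cdots y_1^{-1}x_1\cdots x_m=1 .
$$
Neither side of the original equation is trivial, since a nonempty reduced expression is never the identity. The case where one side is the empty word therefore does not occur, except when both are empty, which we may exclude.

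\textbf{Length and factors.} Argue by induction on $m+\ell$. Consider the middle juncture $y_1^{-1}x_1$.

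\begin{enumerate}[(a)]
\item Suppose $y_1$ and $x_1$ lie in different factors. Then the whole word is reduced of length $m+\ell\ge 1$. By the normal form theorem it is not $1$, a contradiction.
\item Hence $y_1$ and $x_1$ lie in the same factor. If $y_1^{-1}x_1\notin H$, the word obtained by merging them into one syllable is reduced and nontrivial, again a contradiction.
\item So $c_1:=x_1^{-1}y_1\in H$, that is, $y_1=x_1c_1$.
\end{enumerate}

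Substituting gives $x_2\cdots x_m=c_1\,y_2\cdots y_\ell$, that is,
$$
x_2\cdots x_m=(c_1y_2)\,y_3\cdots y_\ell .
$$
We must handle the case where one side now becomes empty.

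\begin{itemize}
\item If $m=1$ and $\ell>1$, then $c_1y_2\cdots y_\ell=1$. Here $c_1y_2$ lies in the same factor as $y_2$ and outside $H$, so the expression $(c_1y_2)y_3\cdots y_\ell$ is reduced of positive length. This is impossible.
\item The case $\ell=1$, $m>1$ is symmetric.
\item If $m=\ell=1$, then $x_1=y_1$ directly, with $c_1=1$ absorbed. This gives the $m=1$ clause.
\end{itemize}

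Otherwise both $x_2\cdots x_m$ and $(c_1y_2)y_3\cdots y_\ell$ are reduced. Multiplying by an element of $H$ keeps $c_1y_2$ in its factor and outside $H$. The induction hypothesis then gives $m-1=\ell-1$ and matching factors.

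\textbf{The conjugation formulas.} These come out of the same induction if we track the $c_i$. Formalise the step as a claim: if $x_1\cdots x_m=c\,y_1\cdots y_\ell$ with $c\in H$ and both expressions reduced, then $m=\ell$, and there exist $c_i\in H$ with $c_0=c$ such that
$$
y_i=c_{i-1}^{-1}x_ic_i\quad(1\le i\le m),\qquad c_m=1 .
$$
This is the form stated in the lemma.

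To prove the claim, replace $y_1$ by $cy_1$ and apply the first-syllable argument above. This produces $c_1:=x_1^{-1}cy_1\in H$, i.e.
$$
y_1=c^{-1}x_1c_1 .
$$
Then recurse on $x_2\cdots x_m=c_1y_2\cdots y_m$. At the final syllable we get $x_m=c_{m-1}y_m$, which is $y_m=c_{m-1}^{-1}x_m$. Taking $c=1$ yields the lemma.

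\textbf{Main obstacle.} The only delicate point is the bookkeeping at the boundaries: making sure the intermediate expressions stay reduced after absorbing elements of $H$, and treating $m=1$ separately. The substantive input is just the normal form theorem in the form ``a nonempty reduced word is nontrivial'' (\cite[Chapter IV, Theorem 2.6]{LS}).
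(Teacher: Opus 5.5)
Your proof is correct, but it takes a different route from the paper.

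The paper fixes transversals $T_A,T_B$ and pushes both expressions into normal form. It writes $x_1=r_1h_1$, $h_{i-1}x_i=r_ih_i$, and similarly $y_1=s_1h_1'$, $h_{i-1}'y_i=s_ih_i'$. Uniqueness of normal forms then gives $m=\ell$, $r_i=s_i$ (hence matching factors) and $h_m=h_m'$ all at once. The connecting elements are defined explicitly as $c_i=h_i^{-1}h_i'$, and the three identities follow by direct computation.

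You instead use only the reduced form theorem (a nonempty reduced expression is nontrivial). You peel off one syllable at a time by induction on $m+\ell$, which produces $c_i$ recursively as $c_i=x_i^{-1}c_{i-1}y_i$. This avoids transversals altogether. The cost is that you must treat the boundary cases $m=1$ or $\ell=1$ separately, and check that absorbing $c_{i-1}$ into $y_i$ keeps the expression reduced. You do both correctly.

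The two constructions give the same elements. The lemma's equations force $c_1=x_1^{-1}y_1$ and $c_i=x_i^{-1}c_{i-1}y_i$, and one checks that $x_1^{-1}y_1=h_1^{-1}h_1'$, and so on inductively. The paper's version is shorter and non-inductive once the normal form theorem is taken as a black box. Yours rests on a weaker-looking input and shows directly why each $c_i$ is forced to lie in $H$.
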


\begin{proof}
	Choose left transversals $T_A,T_B$ for $H$ in $A,B$, each containing
	$1$.   We will convert both reduced expressions to one in normal form. Let $r_1\in T_A\cup T_B$ and $h_1\in H$ be such that $x_1=r_1h_1$. Then, for every $2\leq i\leq m$, let $r_i\in T_A\cup T_B$ and $h_i\in H$ be such that $h_{i-1}x_i=r_ih_i$.
	
	Notice that, since $x_1\cdots x_m$ is reduced, then $r_1\cdots r_mh_m$ is in normal form because $x_i$ lies in the same free factor as $r_i$ and $r_i\neq 1$ for all $i\in [m]$. Also,  $$x_1\cdots x_m=r_1\cdots r_mh_m.$$ Proceeding analogously, writing $s_i$ instead of $r_i$ and $h_i'$ instead of $h_i$, we have that 
	$$y_1\ldots y_\ell=s_1\cdots s_\ell h_\ell',$$ for $s_i\in T_A\cup T_B$ and $h_\ell'\in H$.
	
	Since both expressions on the right hand side are in normal form, we have that $m=\ell$, $s_i=r_i$ for all $i\in [m]$ and $h_m=h_\ell'$.

	For $1\leq i\leq m-1$, let
	$
	c_i=h_i^{-1}h_i'\in H.
	$
	Then
	$$
	y_1=r_1h_1'=r_1h_1(h_1^{-1}h_1')=x_1c_1.
	$$
	For every $2\leq i\leq m-1$, we have
	$$
	x_i=h_{i-1}^{-1}r_ih_i
	\qquad\text{and}\qquad
	y_i=(h_{i-1}')^{-1}r_ih_i'.
	$$
	Therefore,
	\begin{align*}
		c_{i-1}^{-1}x_ic_i
		&=(h_{i-1}^{-1}h_{i-1}')^{-1}
		(h_{i-1}^{-1}r_ih_i)
		(h_i^{-1}h_i')\\
		&=(h_{i-1}')^{-1}r_ih_i'\\
		&=y_i.
	\end{align*}
	Finally, since $h_m=h_m'$, we have
	\begin{align*}
		c_{m-1}^{-1}x_m
		&=(h_{m-1}^{-1}h_{m-1}')^{-1}
		(h_{m-1}^{-1}r_mh_m)\\
		&=(h_{m-1}')^{-1}r_mh_m'\\
		&=y_m.
	\end{align*}
	\end{proof}

We will also use the fact that conjugates of free factors are either equal or intersect trivially \cite[Lemma 2.1]{HandelMosher}. This follows easily from Bass--Serre theory. Indeed, in the Bass–Serre tree of $H*K$, conjugates of $H$ and $K$ are vertex stabilizers, and edge stabilizers are trivial. If two distinct vertex stabilizers had a nontrivial element in common, that element would fix both vertices and therefore the geodesic joining them. In particular, it would fix an edge, contradicting the triviality of edge stabilizers. 

\begin{lemma}\label{lem:free-factor-intersections}\cite[Lemma 2.1]{HandelMosher}
	Let
	$
	P=\mathop{\ast}_{\lambda\in\Lambda} P_\lambda
	$
	be a free product. For all $x,y\in P$ and
	$\lambda,\mu\in\Lambda$, either
	$$
	xP_\lambda x^{-1}=yP_\mu y^{-1},
	\qquad
	\text{ or } \qquad
	xP_\lambda x^{-1}\cap yP_\mu y^{-1}=\{1\}.
	$$
\end{lemma}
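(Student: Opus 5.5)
The plan is to use the action of $P$ on its Bass--Serre tree $T$ and to reduce the statement to the case $x=1$.

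\textbf{Reduction.} Conjugating both subgroups by $x^{-1}$ replaces the pair $xP_\lambda x^{-1}$, $yP_\mu y^{-1}$ by $P_\lambda$ and $zP_\mu z^{-1}$, where $z=x^{-1}y$. Conjugation is an automorphism, so it preserves equality of subgroups and triviality of their intersection. Hence it suffices to treat $x=1$.

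\textbf{The tree.} Recall the standard tree $T$ for $P=\ast_\lambda P_\lambda$:
\begin{itemize}
\item the vertices are the cosets $gP_\lambda$ for all $\lambda$, together with the elements $g\in P$, which serve as central vertices;
\item $g$ is joined to $gP_\lambda$ for each $\lambda$;
\item $P$ acts by left multiplication.
\end{itemize}
The stabilizer of the vertex $zP_\mu$ is $zP_\mu z^{-1}$. The stabilizer of a central vertex $g$ is trivial, so every edge stabilizer is trivial. That $T$ is a tree follows from the normal form theorem for free products, which is the trivial-amalgam case of the normal form theorem recalled above: a reduced closed path in $T$ would produce a nontrivial reduced word representing $1$.

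\textbf{The dichotomy.} Two cases remain.
\begin{itemize}
\item If the vertices $P_\lambda$ and $zP_\mu$ coincide, then $\lambda=\mu$, since cosets of different factors are different vertices. Then $z\in P_\lambda$, and so $zP_\mu z^{-1}=P_\lambda$, which is the first alternative.
\item If the vertices are distinct, take any $g$ in the intersection. Then $g$ fixes both vertices. Because $T$ is a tree and $g$ acts by automorphisms, $g$ maps the unique geodesic between the two vertices to itself with its endpoints fixed, so it fixes that geodesic pointwise. In particular $g$ fixes an edge, and therefore $g=1$. This is the second alternative.
\end{itemize}

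\textbf{Expected obstacle.} The main difficulty is checking that $T$ really is a tree, that is, that it is connected and contains no cycles. Connectivity holds because the $P_\lambda$ generate $P$. The absence of cycles comes from the uniqueness of reduced forms. If I wanted to avoid the tree entirely, I would instead argue directly with normal forms. Suppose $1\ne a\in P_\lambda$ equals $zbz^{-1}$ with $b\in P_\mu$. Write $z$ in reduced form, cyclically cancelling, and absorb into $b$ any last syllable of $z$ that lies in $P_\mu$. After this, $zbz^{-1}$ is reduced of length at least $3$, unless $z$ becomes trivial. Uniqueness of reduced forms then forces $z\in P_\mu$ and $\mu=\lambda$, and so the two conjugates are equal.
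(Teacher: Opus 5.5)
Your proof is correct and follows the same Bass--Serre argument the paper sketches: conjugates of the factors are vertex stabilizers, edge stabilizers are trivial, and a nontrivial element fixing two distinct vertices would fix the geodesic between them, and hence an edge. Your version is more complete than the paper's: you build the tree for an arbitrary index set $\Lambda$ (the paper only mentions $H*K$), you verify via normal forms that it is a tree, and you treat the case of coinciding vertices explicitly.
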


We also recall the Kurosh Subgroup Theorem, which describes the structure of
subgroups of free products.

\begin{theorem}[Kurosh Subgroup Theorem]
	Let
	$
	G = \ast_{\lambda \in \Lambda} G_\lambda
	$
	be a free product, and let $H \leq G$. 
	 Then $H$ is the free product of a free group together
	with groups that are conjugates of subgroups of the free factors $G_\lambda$ of $G.$\end{theorem}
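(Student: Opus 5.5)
The Kurosh Subgroup Theorem is a classical result that is quoted here rather than proved. If I were to prove it, I would use Bass--Serre theory, in the same spirit as the argument given above for Lemma \ref{lem:free-factor-intersections}.

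First I would build the Bass--Serre tree $T$ of the free product $G=\ast_{\lambda\in\Lambda}G_\lambda$. This is the graph of groups consisting of a central vertex with trivial vertex group, joined by an edge with trivial edge group to a vertex carrying $G_\lambda$, for each $\lambda$. The fundamental group of this graph of groups is $G$. Its universal cover $T$ is therefore a tree on which $G$ acts without inversions. Edge stabilizers in $T$ are trivial. The vertex stabilizers are the conjugates $gG_\lambda g^{-1}$, together with the trivial stabilizers of the lifts of the central vertex.

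Next I would restrict the action to $H$, which gives an action of $H$ on the tree $T$ without inversions. The structure theorem of Bass--Serre theory then identifies $H$ with the fundamental group of the quotient graph of groups $H\backslash T$. In that graph of groups:
\begin{itemize}
\item each vertex group is the stabilizer $H_v=H\cap gG_\lambda g^{-1}$ for a chosen lift $v$ of the vertex, so it is a conjugate of a subgroup of some $G_\lambda$, namely $g(g^{-1}Hg\cap G_\lambda)g^{-1}$;
\item each edge group is trivial, because $H_e\le G_e=\{1\}$.
\end{itemize}

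The fundamental group of a graph of groups with trivial edge groups can then be computed directly. Choose a maximal tree $Y_0$ in the quotient graph $Y=H\backslash T$. Amalgamating along $Y_0$ over trivial edge groups gives the free product of all the vertex groups. Each edge outside $Y_0$ contributes a stable letter with no relations, and these letters generate a free group $F$ of rank $|E(Y)\setminus E(Y_0)|$. Hence
$$H\cong F\ast\Big(\mathop{\ast}_{v} H_v\Big),$$
where each $H_v$ is a conjugate of a subgroup of a free factor. This is exactly the statement.

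The main obstacle is the structure theorem itself, that is, identifying $H$ with $\pi_1(H\backslash T)$. If I did not want to cite it, I would prove it by hand:
\begin{itemize}
\item choose a fundamental domain for the action, using a lift of $Y_0$ to a subtree of $T$ together with lifts of the remaining edges;
\item show that $H$ is generated by the vertex stabilizers along this subtree together with the elements that translate the endpoints of the extra edges;
\item prove there are no further relations by a ping-pong or reduced-path argument: a nontrivial reduced word in these generators moves a base vertex along a path without backtracking, so it does not fix that vertex.
\end{itemize}
Triviality of edge stabilizers is what makes the no-backtracking argument straightforward, because no edge group can absorb letters.
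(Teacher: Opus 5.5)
The paper gives no proof of this theorem. It recalls it as a classical fact and uses it only once, in Proposition~\ref{prop:two-connected}, to conclude that the finite subgroups $A\cong C_4$ and $B\cong D_3$ lie in conjugates of free factors of a putative splitting $G=H*K$. Your argument is the standard Bass--Serre proof, as in Serre's \emph{Trees}, and it is correct. Restricting the action on the Bass--Serre tree of $\ast_\lambda G_\lambda$ to $H$ gives a quotient graph of groups with trivial edge groups and vertex groups $H\cap gG_\lambda g^{-1}$. Its fundamental group is the free product of these vertex groups with a free group on the edges outside a maximal tree; the rank counts geometric, i.e.\ unoriented, edges.

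Your by-hand sketch of the structure theorem has one imprecision. The claim that a nontrivial reduced word does not fix the base vertex is false for a nontrivial element of the stabilizer of the base vertex itself. To fix this, either take the base vertex to be a lift of the central vertex, whose stabilizer is trivial, or treat words of path-length zero separately.

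Also, for the paper's purposes the full theorem is more than is needed. A finite group acting on a tree without inversions fixes a vertex. Hence any finite subgroup of $\ast_\lambda G_\lambda$ lies in a vertex stabilizer of $T$, i.e.\ in a conjugate of some $G_\lambda$. This route avoids the structure theorem, which you identify as the main obstacle. Your route costs more but yields the complete Kurosh decomposition.
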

\section{A virtually free group with a strictly 2-geodetic Cayley graph}
We will now construct a non-plain virtually free group with a   strictly $2$-geodetic Cayley graph, answering to the second half of Conjecture \ref{conjectureelderetal} in the negative. To do so, we will describe the language of geodesics of $G$ with respect to $X$. We start by introducing some definitions.

Let $G$ be a group generated by a finite set $A$, and let
$\pi\colon A^* \to G$ denote the natural evaluation map. A
\emph{cross-section} for $G$ over $A$ is a language $L \subseteq A^*$
such that the restriction
$
\pi|_L\colon L \to G
$
is a bijection. Thus, every element of $G$ is represented by exactly
one word in $L$. If $L$ is a rational language, then we call $L$ a
\emph{rational cross-section}.

An \emph{automatic structure} for $G$ consists of a finite generating
set $A$ and a rational language $L \subseteq A^*$ mapping surjectively
onto $G$, for which there exists a constant $K \geq 0$ such that
whenever $u,v\in L$ satisfy
\[
d_G({u}\pi,{v}\pi)\leq 1,
\]
the paths in the Cayley graph $\Gamma(G,A)$ labelled by $u$ and $v$,
and starting at the identity, synchronously $K$-fellow travel. That is,
if $u(t)$ and $v(t)$ denote the points reached at time $t$ along these
paths (with a path remaining at its endpoint after it terminates), then
\[
d_G\bigl(u(t),v(t)\bigr)\leq K
\]
for all $t\geq 0$. A group admitting such a structure is called an
\emph{automatic group}. If $L$ is moreover a cross-section, then the
automatic structure is said to have uniqueness.

  Put
$$
A=\langle r\mid r^4=1\rangle,
\qquad
B=\langle s,t\mid s^2=t^2=(st)^3=1\rangle\cong D_3,
$$
and amalgamate
$
C=\langle s\rangle=\{1,s\}
$
with $s=r^2\in A$. Let
$X=\{r,r^{-1},s,t\}$ be a generating set for $G=A\ast_C B$. Notice that $X$ is inverse-closed as $s,t$ are involutions.

The group $G$ is infinite. Indeed, for every $m\geq1$ the word
$
(rt)^m
$
is a reduced normal form, and these normal forms are pairwise
distinct.

Moreover, $G$ is virtually free. This follows from the theorem of
Karrass, Pietrowski and Solitar~\cite{KPS}, since $G$ is the
fundamental group of a finite graph of finite groups with vertex
groups $C_4$ and $S_3$ and edge group $C_2$. In particular, $G$
is hyperbolic.

Notice that  $B=\{1,s,t,st,ts,sts\}$, and that $sts=tst$. Indeed $(st)^2=(st)^{-1}=ts$, so $stst=ts$ and multiplying by $s$ on the left yields $tst=sts$. 

Let
$
P=\{r,r^{-1},s\}=A\setminus\{1\}
$
and $L_0\subseteq X^*$ be the language consisting of the empty word together with the words $u$ satisfying:
\begin{enumerate}[(i)]
	\item $u$ alternates between letters of $P$ and the letter $t$;
	\item  $s$ only appears as the first or last letter of $u$.
\end{enumerate}
  Finally, we define 
$$L=L_0\cup\{tst\}.$$

We will show that $L=\Geo_X(G)$. We first show an inclusion.
\begin{lemma}\label{lem:D3-geodesics-in-L}
	Every geodesic word over $X$ belongs to $L$.
\end{lemma}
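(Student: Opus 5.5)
Since every subword of a geodesic word is again geodesic, the plan is to show that a geodesic word avoids a short list of ``forbidden'' factors. We then check that any word avoiding them lies in $L$. The argument is purely local and combinatorial; we never need the amalgam structure beyond a few relations in $A$ and $B$.

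\textbf{Step 1: forbidden factors of length two.} A geodesic contains no factor $xy$ with $x,y$ both in $P=\{r,r^{-1},s\}$. Such a factor evaluates in $A=C_4$ either to $1$ or to a single letter of $P\cup\{1\}$, since $rr=s$, $r^{-1}r^{-1}=s$, $rs=r^{-1}$, $ss=1$, $rr^{-1}=1$, and so on. Either way the word shortens. Likewise $tt=1$ is forbidden. Hence every geodesic alternates between letters of $P$ and $t$, which is condition (i).

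\textbf{Step 2: the letter $s$ in the interior.} Suppose $s$ occurs in a geodesic but is neither the first nor the last letter. By Step 1 it sits in a factor $tst$. We must show that $tst$ cannot be extended on either side, i.e.\ that $xtst$ and $tsty$ are not geodesic for $x,y\in P$.
\begin{itemize}
\item If $x=s$, then $stst=ts$ has length $2<4$.
\item If $x=r^{\pm1}$, write $r^{\pm1}tst=r^{\pm1}sts=r^{\mp1}ts$, which has length $3<4$.
\end{itemize}
The case of $tsty$ is symmetric. Consequently, if $s$ is interior, the geodesic is exactly $tst\in L$. Otherwise every occurrence of $s$ is at an end, which is condition (ii), and the word lies in $L_0$.

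We also need to handle a word consisting of the single letter $s$ together with its neighbours, but that is already covered by the cases above. We should also confirm that words like $sts$ are excluded unless needed. Here $sts\in L_0$ and $sts=tst$, both of length three, so nothing further is required for this inclusion.

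\textbf{Main obstacle.} The only delicate step is the relation bookkeeping in Step 2, namely using $sts=tst$ and $r^{\pm1}s=r^{\mp1}$ to shorten $xtst$ and $tsty$. The plan is to verify these few identities directly in $B\cong D_3$ and $A\cong C_4$; everything else is immediate from the closure of geodesics under taking factors.
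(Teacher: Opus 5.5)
Your proposal is correct and follows essentially the same route as the paper. The paper first forbids factors in $P^2$ and $tt$, then rules out an interior $s$ by rewriting $ptst=(ps)ts$ and $tstp=st(sp)$. Your case split $x=s$ versus $x=r^{\pm1}$ is the same computation made explicit. Your remark about $sts$ is unnecessary but harmless.
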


\begin{proof}
	Let $w$ be geodesic. Clearly, it cannot contain two consecutive letters from $P$, as their product is either empty or represented by a single letter in $P$. The word cannot contain
	$tt$ either, since $t^2=1$.  Thus $w$ alternates between $P$ and $t$.
	
	Suppose an occurrence of $s$ is not at an endpoint. Since letters from $P$ alternate with $t$, there is a factor of the form $tst$. If $w=tst$, then $w\in L$ and we are done. If not, then it has an adjacent letter $p\in P$ on at least one side.  Suppose it is on the left, that is, that $ptst$ is a facor of $w$ for some $p\in P$. Then,  since $tst=sts$, 
	$$
	ptst=psts=(ps)ts.
	$$
	But $ps\in A$ is either $1$ or a letter of $P$ and $ptst$ is not geodesic. If, on the other hand, $p$ is on the right, that is $tstp$ is a factor of $w$, then 
	
	$$
	tstp=stsp=st(sp),
	$$
	and again $w$ is not geodesic.   Therefore, $s$ can only occur as either the first or last letter of $w$ and $w\in L$.
\end{proof}

Let $w\in L_0$ contain at least one occurrence of $t$. We will view $w$
as a reduced expression in $A*_C B$ as follows. If $w$ starts with $s$,
then, by the definition of $L_0$, this $s$ is followed by $t$, and we
group the initial subword $st$ as one element of $B\setminus C$. Similarly,
if $w$ ends with $s$, we group the terminal subword $ts$ as one element
of $B\setminus C$. All remaining letters $r,r^{-1}$ and $t$ are left
unchanged. Thus, for example,
$$
strts=(st)\,r\,(ts).
$$
If $w=sts$, we regard the whole word as the single element $sts\in
B\setminus C$. In this way, every word in $L_0$ containing $t$ is viewed
as a reduced expression.

\begin{theorem}\label{prop:D3-injective}
$L_0$ is a geodesic automatic structure with uniqueness for $G$. Consequently, $\Geo_X(G)=L$. 
\end{theorem}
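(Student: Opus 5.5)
The proof has four parts: $L_0$ is regular, $\pi|_{L_0}$ is a bijection onto $G$, fellow travelling holds, and the geodesic language is exactly $L$.

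\textbf{Regularity.} $L_0$ is clearly rational. Conditions (i) and (ii) are checked by a finite automaton that remembers whether the last letter read was $t$ or a letter of $P$, and whether an $s$ has occurred in a non-initial position.

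\textbf{Injectivity.} Take $u,v\in L_0$ with $\pi(u)=\pi(v)$.
\begin{itemize}
\item If either word contains no $t$, it is a single letter of $P$ or empty, so its value lies in $A$. The other word then cannot contain $t$: viewed as a reduced expression of length at least one with a syllable in $B\setminus C$, its value would lie outside $A$. Hence $u=v$.
\item Otherwise, view both as reduced expressions as described before the theorem, and apply Lemma~\ref{lem:comparison}. The syllable counts agree and corresponding syllables lie in the same factor, with $y_1=x_1c_1$, $y_i=c_{i-1}^{-1}x_ic_i$ and $y_m=c_{m-1}^{-1}x_m$ for some $c_i\in C=\{1,s\}$.
\end{itemize}
The key observation is that $s=r^2$ is central in $A$. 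So in an interior $A$-syllable, $y_i=c_{i-1}^{-1}x_ic_i=x_i\,c_{i-1}c_i$ with $x_i,y_i\in\{r,r^{-1}\}$, which forces $c_{i-1}=c_i$.

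The $B$-syllables need a small table:
\begin{itemize}
\item an interior $B$-syllable is $t$, and $c\,t\,c'\in\{t\}$ forces $c=c'$;
\item an initial syllable lies in $\{t,st\}$, and a terminal one in $\{t,ts\}$;
\item a single syllable $tst$ is allowed.
\end{itemize}
The constants $c_i$ are therefore all equal to a single $c$. It remains to propagate to the ends and rule out $c=s$. For example, $y_1=x_1s$ with $x_1,y_1\in\{t,st\}$ would give $st=t\cdot s$, which is false in $D_3$; the analogous checks for $A$-syllables at the ends give $r\cdot s=r^{-1}$. Here I must be careful: an end letter $r^{\pm1}$ can absorb $s$ only if the other end compensates. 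That is, $c=s$ would require both end syllables to flip. So I would check directly that $u$ and $v$ then differ by $x_1\mapsto x_1s$ and $x_m\mapsto s x_m$. These two flips are compatible only when both ends are $A$-letters, giving $r\,t\cdots t\,r$ versus $r^{-1}t\cdots t\,r^{-1}$.

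\textbf{Main obstacle.} The risk in the last case is a genuine coincidence $r\,w\,r=r^{-1}w\,r^{-1}$, which holds if $s$ commutes past the middle word $w$. Since $s$ does not commute with $t$ in $D_3$, it fails, but this needs a precise argument. I would run the transversal computation from the proof of Lemma~\ref{lem:comparison} explicitly, with $T_A=\{1,r\}$ and $T_B=\{1,t,st\}$, and show each word of $L_0$ maps to a distinct normal form. That gives injectivity directly and avoids the case analysis above.

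\textbf{Surjectivity.} Every element has a normal form $u_1\cdots u_nc$. Rewrite each $A$-syllable as $r^{\pm1}$ and each $B$-syllable as a word in $t,st,ts,sts$. Push $s$'s using centrality in $A$ and the relations $ts=s\cdot sts\cdot$ etc. until only end letters carry $s$, producing a word of $L_0$.

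Alternatively, and more cleanly: every element has a geodesic, every geodesic lies in $L$ by Lemma~\ref{lem:D3-geodesics-in-L}, and $tst$ has the same value as $sts\in L_0$. Hence $\pi(L_0)=G$.

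\textbf{Geodesicity and $\Geo_X(G)=L$.} Take $g\in G$ and $u\in L_0$ representing it. Pick a geodesic $w$ for $g$. By the lemma $w\in L$, so $w\in L_0$ or $w=tst$. If $w\in L_0$, injectivity gives $u=w$, so $u$ is geodesic. If $w=tst$, then $g=sts$, and $u=sts$ has the same length, so $u$ is geodesic. Thus every word of $L_0$ is geodesic, and $\Geo_X(G)=L_0\cup\{tst\}=L$, since $tst$ has length $3=|sts|$.

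\textbf{Fellow travelling.} This is automatic from hyperbolicity: in a hyperbolic group, geodesics with endpoints at distance at most $1$ synchronously fellow travel with a uniform constant depending only on $\delta$. So $L_0$ is an automatic structure with uniqueness.
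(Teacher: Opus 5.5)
\textbf{Injectivity is not established.} Your rationality argument, your second surjectivity argument (every element has a geodesic, geodesics lie in $L$, and $tst=sts\in L_0$), the proof that $L_0$ consists of geodesics, and the fellow-travelling step all match the paper. The gap is in injectivity, which is the core of the theorem.

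\emph{A false step.} You claim that an interior $A$-syllable forces $c_{i-1}=c_i$. Centrality of $s$ gives $y_i=x_i\,c_{i-1}c_i$. But $c_{i-1}c_i=s$ is compatible with $x_i,y_i\in\{r,r^{-1}\}$, since $rs=r^{-1}$: take $x_i=r$, $c_{i-1}=1$, $c_i=s$, $y_i=r^{-1}$. So your reduction to a single constant $c$ is unjustified as argued.

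\emph{The missing idea.} You understate what the interior $B$-syllables give. The equation $c_{i-1}\,t\,c_i=t$ forces $c_{i-1}=c_i=1$, not merely $c_{i-1}=c_i$, because $t,ts,st,sts$ are four distinct elements of $D_3$. Your end checks also hold: an initial $B$-syllable in $\{t,st\}$ forces $c_1=1$, and a terminal one in $\{t,ts\}$ forces $c_{m-1}=1$. Since syllables alternate, every $c_i$ sits next to some $B$-syllable. Hence $c_i=1$ for all $i$, and $u=v$ syllable by syllable; the $A$-syllables never need to be examined. This is exactly the paper's argument.

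It also removes your ``main obstacle'' at once. In $r\,t\cdots t\,r$ versus $r^{-1}t\cdots t\,r^{-1}$ we have $m\ge 3$, and $y_1=x_1c_1$ forces $c_1=s$. The second syllable is then an interior $t$, which would need $s\,t\,c_2=t$. That is impossible, since $st\neq t$ and $sts\neq t$.

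\textbf{The fallback.} Running the transversal computation with $T_A=\{1,r\}$ and $T_B=\{1,t,st\}$ would also work. Every $A$-syllable contributes $r$ and passes on $h\in\{1,s\}$ recording its sign. The following $B$-syllable records that $h$ as $t$ versus $st$ and resets it to $1$; a terminal $ts$ instead leaves $h=s$. So distinct words of $L_0$ give distinct normal forms. However, you only announce this computation and do not carry it out. As written, injectivity, and hence uniqueness and the conclusion that every word of $L_0$ is geodesic, remains unproved.
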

\begin{proof}
	We start by proving that $L_0$ is rational. Let
	$$
	Q=\{r,r^{-1}\}
	\qquad\text{and}\qquad
	P=\{r,r^{-1},s\}.
	$$
	Recall that $L_0$ consists of the words which alternate between letters
	of $P$ and the letter $t$, with $s$ allowed only as the first or last
	letter of the word.
	
	We claim that
	\begin{align*}
		L_0=\{\varepsilon\}\cup P
		\cup P(tQ)^*tP
		\cup P(tQ)^*t		\cup t(Qt)^*P
		\cup t(Qt)^*.
	\end{align*}
	Indeed, the first two languages are the words of $L_0$ containing no $t$. If a
	word contains $t$, there are four possibilities according to whether its
	first and last letters belong to $P$ or are equal to $t$. Since every
	letter from $P$ which is not an endpoint must belong to $Q$, those are precisely
	$$
	P(tQ)^*tP,\qquad
	P(tQ)^*t,\qquad
	t(Qt)^*P,\qquad
	t(Qt)^*.
	$$
 Hence $L_0$ is rational.

	We will now show that it is a cross-section.  By Lemma~\ref{lem:D3-geodesics-in-L}, every geodesic word belongs to $L$. Notice that $tst=sts$ and $sts\in L_0$. So, $L_0$ contains a representative for every element of $G$, that is, the evaluation map is surjective. 
	
	We will now show that it is injective. 
	Let $u,v\in L_0$ represent the same element of $G$. If neither word
	contains $t$, then each belongs to
	$
	\{1,s,r,r^{-1}\},
	$
	and the claim is immediate.

Suppose that exactly one of $u,v$ contains $t$, say $v$. Then $u$
represents an element of $A$. If $v$ does not start or end with $s$, then
$v$ is reduced. If $v$ starts with $s$, then this occurrence of $s$ is
followed by a $t$, and we group these two letters as the element $st\in
B\setminus C$. Similarly, if $v$ ends with $s$, we group the final two
letters as the element $ts\in B\setminus C$. If $v=sts$, we regard the
whole word as the element $sts\in B\setminus C$. In every case, this gives
a reduced expression for $v$ containing at least one term from
$B\setminus C$. Since $u$ is a reduced expression with one term from
$A\setminus C$, by Lemma~\ref{lem:comparison}, we have that $u$ and $v$ must represent distinct elements.

	So, suppose that both $u$ and $v$ contain $t$.
	Again, we will see $u$ and $v$ as reduced   expressions by grouping an initial
	$s$ with the following $t$, and a terminal $s$ with the preceding $t$.
	All letters $r,r^{-1}$ are terms from $A\setminus C$, and the terms from
	$B\setminus C$ are obtained from the letters $t$ together with a possible
	endpoint $s$.
	By Lemma~\ref{lem:comparison}, the two reduced expressions have the same
	number of terms, corresponding terms belong to the same factors, and
	there are elements $c_i\in C=\{1,s\}$ relating corresponding terms as in
	that lemma.
	
	We claim that every $c_i$ is trivial, and so that $u=v$. Consider first a $B$-term which is
	not the first or last term of the reduced expression. Since $s$ can occur
	only at an endpoint of a word in $L_0$, this term is $t$ in both $u$ and
	$v$. Hence the comparison lemma gives
	$$
	t=c^{-1}td
	$$
	for some $c,d\in\{1,s\}.
	$
	The four possibilities for the right hand side are
	$$
	t,\qquad st,\qquad ts,\qquad sts,
	$$
	which are distinct elements of $B$. Therefore $c=d=1$.

	Now suppose that the first term of the reduced expression belongs to $B$.
	It is either $t$ or $st$ in each word. If $x$ and $y$ are the corresponding first
	terms of $u$ and $v$, then
	$
	y=xc
	$
	for some $c\in\{1,s\}.
	$
	Suppose that $c=s$. Then, if $x=t$, we get $y=ts$ and if $x=st$, we get $y=sts$. But $y$ must be wither $t$ or $st$, which is absurd.
	Hence $c=1$. Similarly, if the last term belongs to $B$, then it is
	either $t$ or $ts$, and the equation
	$
	y=c^{-1}x
	$
	forces $c=1$, since
	$$
	st\notin\{t,ts\}
	\qquad\text{and}\qquad
	sts\notin\{t,ts\}.
	$$

Thus, in every
	case all the elements $c_i$ in a $B$-term are trivial.
	Since the factors alternate, every $c_i$ is adjacent to a $B$-term, and so 
	$
	c_i=1
	$
	for every $i$ and $u=v$. Therefore, $L_0$ is a rational cross-section for $G$ with respect to $X$. 
	
	Now, by Lemma~\ref{lem:D3-geodesics-in-L}, every geodesic word belongs to $L$. Notice that $L=L_0\cup\{tst\}$ and $tst=sts$. Also $sts\in L_0$. So $L_0$ contains a geodesic for every element of $G$. Since $L_0$ is a rational cross-section, its representative of each element is unique. It follows
	that every word in $L_0$ is geodesic. Since $tst$ is also geodesic, we deduce that $\Geo_X(G)=L$. 
	
	It is well known that, for a hyperbolic group generated by $X$, $\Geo_X(G)$ is an automatic structure \cite{EpsteinEtAl}. It is obvious that the fellow traveler constant that works for $\Geo_X(G)$ also works for $L_0$, making it a geodesic automatic structure with uniqueness for $G$. 
\end{proof}

As an immediate corollary, we have that  $\Cay(G,X)$ is very close to being geodetic. Indeed, every element as a single geodesic (the one in $L_0$) except $tst=sts$.
\begin{corollary}\label{maincor}
	The Cayley graph $\Cay(G,X)$ is 2-geodetic but not geodetic.  The only element
with two geodesic representatives is $sts=tst$.
\end{corollary}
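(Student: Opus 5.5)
The plan is to read everything off Theorem~\ref{prop:D3-injective}, which gives $\Geo_X(G)=L=L_0\cup\{tst\}$ and says that $L_0$ is a cross-section. Fix $g\in G$. The geodesics from $1$ to $g$ in $\Cay(G,X)$ are in bijection with the geodesic words evaluating to $g$. (Here $X$ consists of distinct nontrivial elements, so distinct words give distinct paths and conversely.) These are the words of $L$ representing $g$.

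I then count the words of $L$ representing $g$. Since $\pi|_{L_0}$ is a bijection, exactly one word of $L_0$ represents $g$. The only other candidate is the single word $tst\notin L_0$, which evaluates to $tst=sts$. So every $g\neq sts$ has exactly one geodesic. The element $sts$ has exactly two: $sts\in L_0$ and $tst$. These are distinct words, and both are geodesic by Theorem~\ref{prop:D3-injective}.

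Since each element has at most two geodesic representatives from the identity, the Cayley graph is $2$-geodetic. For an arbitrary pair of vertices $h,hg$ I will use left translation by $h$, which is a graph automorphism of the Cayley graph and carries geodesics from $1$ to $g$ bijectively onto geodesics from $h$ to $hg$. Hence every pair of vertices is joined by at most two geodesics, and the pair $1,sts$ is joined by exactly two, so the graph is not geodetic.

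The only point needing care is to confirm that $tst$ really differs as a word from every element of $L_0$ and is geodesic. It is not in $L_0$, because its $s$ is interior. It is geodesic because $sts\in L_0$ is geodesic of the same length $3$. All of this is already contained in the theorem, so no real obstacle remains.
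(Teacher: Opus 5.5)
Your proposal is correct and is essentially the paper's argument: the paper also reads the corollary directly off Theorem~\ref{prop:D3-injective}, using that $L_0$ is a cross-section, so each element has exactly one geodesic in $L_0$, with $tst$ as the only extra geodesic word (it represents $sts$). Your remarks on the correspondence between words and paths, and on using left translation to handle an arbitrary pair of vertices, make explicit what the paper leaves implicit.
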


We now observe that $G$ is freely indecomposable. In particular, it is not plain, so  Shapiro's question and Elder--Piggott--Townsend's conjectures cannot both have a positive answer.

Moreover, in \cite[Theorem~3.1]{DromsServatiusServatius}, it is shown that a freely indecomposable group is either infinite cyclic or has a degree of connectivity of at least $2$. We conjecture that $2$-connectendess is an obstruction to geodeticity in the Cayley graph of a virtually free group.

\begin{proposition}
	\label{prop:two-connected}
	The group $G$ is freely indecomposable. As a consequence, $G$ is not plain and, for every generating set $X$ of $G$, the Cayley graph
	$\operatorname{Cay}(G,X)$ is $2$-connected.
\end{proposition}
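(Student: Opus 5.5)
We argue by contradiction. Suppose $G=H\ast K$ with $H,K$ nontrivial. The plan is to show that $G$ is generated by finite subgroups which pairwise intersect nontrivially, and that a free product cannot contain such a configuration.

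\textbf{Step 1: the finite subgroups lie in conjugates of the factors.} The subgroups $A=\langle r\rangle\cong C_4$ and $B\cong D_3$ are finite and together generate $G$. By the Kurosh Subgroup Theorem, a finite subgroup of $H\ast K$ has trivial free part. It cannot be a nontrivial free product either, since a nontrivial free product is infinite. Hence each finite subgroup is conjugate into $H$ or into $K$. So $A\le xP_\lambda x^{-1}$ and $B\le yP_\mu y^{-1}$ for some $x,y\in G$ and $P_\lambda,P_\mu\in\{H,K\}$.

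\textbf{Step 2: both lie in the same conjugate.} The element $s=r^2$ is nontrivial and lies in $A\cap B$. Thus $xP_\lambda x^{-1}\cap yP_\mu y^{-1}\neq\{1\}$, and Lemma~\ref{lem:free-factor-intersections} forces $xP_\lambda x^{-1}=yP_\mu y^{-1}$. So $G=\langle A,B\rangle$ lies in a single conjugate $xP_\lambda x^{-1}$. Then $P_\lambda=G$, which is impossible: the conjugate $xP_\lambda x^{-1}$ is a proper subgroup, because the other factor is nontrivial and meets every conjugate of $P_\lambda$ trivially (by the normal form in the free product, or again by Lemma~\ref{lem:free-factor-intersections}). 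This contradiction shows that $G$ is freely indecomposable.

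\textbf{Step 3: consequences.} $G$ is not plain. Indeed, $G$ is infinite, so a plain decomposition of it cannot consist of a single finite factor. By free indecomposability it would therefore have to be $\mathbb{Z}$, but $G$ contains $D_3$, which is nonabelian. Finally, $G$ is freely indecomposable and not infinite cyclic, so \cite[Theorem~3.1]{DromsServatiusServatius} gives that every Cayley graph $\operatorname{Cay}(G,X)$ with $X$ finite is $2$-connected.

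The main obstacle is the justification in Step 1 that finite subgroups of free products are conjugate into a factor. Kurosh gives this directly, provided one notes that a nontrivial free product, and likewise a nontrivial free part, is infinite. Everything else reduces to the intersection lemma.
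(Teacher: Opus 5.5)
Your proof is correct and follows essentially the same route as the paper: Kurosh puts $A$ and $B$ into conjugates of the free factors, the nontrivial element $s\in A\cap B$ together with Lemma~\ref{lem:free-factor-intersections} forces these to be a single conjugate containing $\langle A,B\rangle=G$, and non-plainness and $2$-connectivity then follow from free indecomposability and the Droms--Servatius--Servatius theorem. The differences are minor and all fine: you make explicit two points the paper leaves implicit (why a finite subgroup lies in a single conjugate factor, and why a conjugate of a factor cannot be all of $G$), and you rule out $\mathbb{Z}$ using the nonabelian subgroup $D_3$ rather than torsion.
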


\begin{proof}
	We first observe that $G$ is freely indecomposable. Suppose, to the
	contrary, that
	$
	G=H*K
	$
	is a nontrivial free product. Since $A\cong C_4$ and
	$B\cong D_3$ are finite, the Kurosh subgroup theorem implies that
	each is contained in a conjugate of one of the free factors.
	Since
	$$
	1\neq s\in A\cap B,
	$$
	Lemma~\ref{lem:free-factor-intersections} shows that these two conjugates
	of free factors must coincide. Hence $A$ and $B$ are contained in
	the same conjugate free factor. Since $A$ and $B$ generate $G$,
	this conjugate free factor is all of $G$, a contradiction.
	Therefore $G$ is freely indecomposable.
	
	If $G$ were plain, then, since it is freely indecomposable, it would
	have to be either finite or infinite cyclic. But $G$ is infinite and
	contains elements of finite order. Hence $G$ is not plain.
	
	Since $G$ is not infinite cyclic, it follows from
	\cite[Theorem~3.1]{DromsServatiusServatius} that no Cayley graph of
	$G$ has a cut vertex. Hence every Cayley graph of $G$ is
	$2$-connected.
\end{proof}

\begin{corollary}\label{cor:either}
	At least one of the following is false:
	\begin{enumerate}
		\item every geodetic group is plain;
		\item every hyperbolic $k$-geodetic group is geodetic.
	\end{enumerate}
	Thus Shapiro's conjecture and the first assertion of Conjecture C cannot
	both be true.
\end{corollary}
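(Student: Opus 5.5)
The plan is a short case analysis on whether the group $G=C_4\ast_{C_2}D_3$ constructed above is geodetic, i.e.\ whether some finite inverse-closed generating set $Y$ of $G$ makes $\operatorname{Cay}(G,Y)$ $1$-geodetic. The assertion is a disjunction, so it suffices to show that whichever way this question is answered, one of the two statements fails for $G$. Both cases use only facts already established for $G$.

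First I would collect those facts.
\begin{itemize}
\item $G$ is hyperbolic, since it is virtually free by the Karrass--Pietrowski--Solitar theorem.
\item $G$ is $k$-geodetic with $k=2$: Corollary~\ref{maincor} shows that $\operatorname{Cay}(G,X)$ is $2$-geodetic for the finite inverse-closed generating set $X=\{r,r^{-1},s,t\}$.
\item $G$ is not plain, by Proposition~\ref{prop:two-connected}.
\end{itemize}

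\emph{Case 1: $G$ is geodetic.} Then $G$ is a geodetic group that is not plain, so statement~1 is false.

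\emph{Case 2: $G$ is not geodetic.} Then $G$ is a hyperbolic $2$-geodetic group that is not geodetic, so statement~2 is false.

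In either case at least one of the two statements fails. For the final sentence, note that Shapiro's conjecture is statement~1. The first assertion of Conjecture~\ref{conjectureelderetal} implies statement~2: its ``only if'' direction says that a hyperbolic group which is $k$-geodetic for some $k$ is geodetic. Hence these two conjectures cannot both hold.

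There is no real obstacle here; the work was done in Corollary~\ref{maincor} and Proposition~\ref{prop:two-connected}. The only point to be careful about is matching the definitions. Being $k$-geodetic is a property of the group, requiring some finite inverse-closed generating set whose Cayley graph is $k$-geodetic, and $X$ qualifies. ``Geodetic'' is understood as a property of the group as well, so in Case 2 we are asserting non-geodeticity for every finite generating set, not just for $X$.
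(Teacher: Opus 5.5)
Your proposal is correct and is exactly the paper's argument. The paper justifies the corollary in the introduction and just before Proposition~\ref{prop:two-connected} by the same dichotomy on whether $G$ admits some finite inverse-closed generating set with geodetic Cayley graph, using the same three facts: hyperbolicity (from virtual freeness), $2$-geodeticity of $\operatorname{Cay}(G,X)$ (Corollary~\ref{maincor}), and non-plainness (Proposition~\ref{prop:two-connected}). One cosmetic point: Shapiro's question as phrased (``geodetic groups are exactly the plain groups'') implies statement~1 rather than being literally equal to it, and that implication is all your final sentence needs.
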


We conjecture that 2-connectedness of the Cayley graph of a virtually free group prevents geodeticity. The intuition is that trees have
unique geodesics, but they also have many cut vertices (in fact, if the tree is infinite, all vertices are cut vertices). If a graph
which looks like a tree from far away is also $2$-connected,
then there must be alternative routes around the cut vertices. In
an infinite vertex-transitive graph, one might expect these
alternative routes eventually to give two different shortest paths
with the same endpoints. 

\begin{question}
	Is there an infinite virtually free group $G$ admitting a finite generating set $X$ such that $\Cay(G,X)$ is geodetic and $2$-connected?
\end{question}

We also pose the more general graph-theoretical version of this question:
\begin{question}
	
	Is there an infinite, locally finite, $2$-connected,
	vertex-transitive geodetic graph which is quasi-isometric to a tree?
\end{question}

In \cite{EtgarLinial}, it is shown that $2$-connected geodetic graphs where all vertices have degree at least $3$ must be $3$-connected. Hence, a $2$-connected infinite geodetic Cayley graph must be $3$-connected (if it had a vertex of degree $2$ or less, then all vertices would have such a degree and the infinite graph would be a line, and so, not $2$-connected). 

So, to prove non-geodeticity of $G$, it would suffice to answer in the negative the following (weaker) version of the question:

\begin{question}
	Is there an infinite virtually free group $G$ admitting a finite generating set $X$ such that $\Cay(G,X)$ is geodetic and $3$-connected?
\end{question}

\section{Further work}

We are left with several questions. The main one concerns geodeticity.
\begin{question}
	Is the group $G$ geodetic?
\end{question}
A positive answer would solve Shapiro's question as the group is not plain. A negative answer would answer the conjecture from \cite{EPT} as we would have an example of a $2$-geodetic group that is not geodetic.

\begin{lemma}
	\label{lem:involution-odd-length}
	Let $H$ be a group and let $X$ be an inverse-closed generating set
	such that $\operatorname{Cay}(H,X)$ is geodetic. Then every
	nontrivial element of order $2$ of $H$ has odd $X$-length.
\end{lemma}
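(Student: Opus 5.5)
Let $g\in H$ be a nontrivial element with $g^2=1$, and let $w=x_1x_2\cdots x_n$ be its unique geodesic word over $X$, so $n=|g|_X\ge 1$. The plan is to produce a second geodesic word for $g$ from $w$ by using $g=g^{-1}$, and then compare the two words letter by letter.

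Since $g=g^{-1}$, the formal inverse word $w^{-1}=x_n^{-1}\cdots x_1^{-1}$ also represents $g$. Because $X$ is inverse-closed, $w^{-1}$ is a word over $X$, and it has length $n$, so it is geodesic. By geodeticity $w^{-1}=w$ as words, that is,
$$
x_i^{-1}=x_{n+1-i}\qquad\text{for all } 1\le i\le n .
$$

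Now suppose, for a contradiction, that $n$ is even, say $n=2m$ with $m\ge 1$. Taking $i=m$ gives $x_{m+1}=x_m^{-1}$, so $w$ contains the factor $x_mx_m^{-1}$, which represents $1$. Deleting this factor yields a word of length $n-2$ representing $g$, which contradicts $w$ being geodesic. Hence $n$ is odd.

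The only points needing care are that inverse-closedness is what makes $w^{-1}$ a word over $X$, and that a geodesic can never contain a factor $xx^{-1}$. There is no real obstacle here; the whole argument is the palindromic symmetry of the unique geodesic.
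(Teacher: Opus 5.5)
Your proposal is correct and follows essentially the same route as the paper. Both arguments note that the formal inverse $w^{-1}$ is a second geodesic for $g=g^{-1}$, use geodeticity to get $x_i=x_{n+1-i}^{-1}$, and for even $n$ find the cancelling pair of mutually inverse letters in the middle of $w$.
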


\begin{proof}
	Let $1\neq h\in H$ satisfy $h^2=1$, and let
	$
	w=x_1x_2\cdots x_n
	$
	be the unique geodesic word representing $h$. Since $h=h^{-1}$,
	the word
	$
	w^{-1}
	=
	x_n^{-1}\cdots x_2^{-1}x_1^{-1}
	$
	is another geodesic representative of $h$. By geodeticity,
	$w=w^{-1}$ as words. Hence
	$$
	x_i=x_{n+1-i}^{-1}
	$$
	for every $i$.
	
	If $n$ were even, then
	$
	x_{n/2}=x_{n/2+1}^{-1},
	$
	so the two middle letters of $w$ would be mutually inverse. The
	word $w$ would therefore not be geodesic, a contradiction. Hence
	$n$ is odd.
\end{proof}

\begin{proposition}
	\label{prop:no-geodetic-enlargement}
	Let
	$
	S=\{r,r^{-1},t,s\}.
	$
	Suppose that $X$ is an inverse-closed generating set containing $S$
	and that $\operatorname{Cay}(G,X)$ is geodetic. Then
	$
	G\setminus\{1\}\subseteq X.
	$
	In particular, no finite inverse-closed generating set containing
	$S$ gives a geodetic Cayley graph.
\end{proposition}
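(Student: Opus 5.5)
The plan is to show that $X$ must contain every nontrivial element, by showing that no element of $G$ can have $X$-length exactly $2$. This suffices: if some element had $X$-length at least $2$, the prefix of length $2$ of one of its geodesics would represent an element of length exactly $2$. So every $g\neq1$ would have length $1$, i.e.\ $g\in X$. The "in particular" clause then follows because $G$ is infinite (Section~3).

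\textbf{Step 1 (the motivating case).} Consider $h=sts=tst$, a nontrivial involution (it is conjugate to $t$). Over $S\subseteq X$ it has the two distinct words $sts$ and $tst$ of length $3$, so $|h|_X\le 3$. If $|h|_X=3$, both words would be geodesic, contradicting geodeticity. If $|h|_X=2$, this contradicts Lemma~\ref{lem:involution-odd-length}. Hence $h\in X$.

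\textbf{Step 2 (general mechanism).} I would next generalise Step 1 to a closure property of $X$ under conjugation by generators. Let $x,y\in X$ with $|xy|_X=2$, and suppose $xyx^{-1}\in X$. Then $(xyx^{-1})\,x$ is a second word of length $2$ representing $xy$. By geodeticity it must coincide with $xy$ letter by letter, which forces $x=y$. Thus, once $X$ is known to be closed under conjugation by its own elements, every length-$2$ element has the form $x^2$. One then rules out $|x^2|_X=2$ by combining this with the specific structure of $G$:
\begin{itemize}
\item[(a)] involutions have odd length by Lemma~\ref{lem:involution-odd-length};
\item[(b)] powers of $r$ already lie in $S\cup\{1\}$;
\item[(c)] for $x$ of infinite order (for example $x=rt$), one writes $x^2$ as a product of two conjugates of generators, each already known to lie in $X$, which gives a second geodesic of length $2$.
\end{itemize}
To obtain the closure property, I would proceed by induction on the $S$-length of the conjugating word. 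Each conjugate $gxg^{-1}$ of an involution $x\in X$ is again an involution, so it has odd length by Lemma~\ref{lem:involution-odd-length}. Its palindromic geodesic $w\,z\,w^{-1}$ (the form given by the proof of that lemma) should then be compared against the competing words obtained from the relations $r^2=s$ and $sts=tst$, exactly as in Step 1.

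\textbf{Main obstacle.} I expect the hardest step to be establishing that $X$ is closed under conjugation, or at least closed enough for Step 2 to apply to an arbitrary length-$2$ element $xy$, where $x,y$ range over the unknown set $X$ rather than over $S$. The relations available only involve $r,s,t$, so every competing geodesic has to be manufactured from conjugates of these three letters. My first attempt would be induction on the normal-form length of Lemma~\ref{lem:comparison}. At each stage I would produce two distinct words of equal length for the same element, by sliding an $s$ across a syllable (using $s=r^2$ central in $A$ and $sts=tst$ in $B$), until the only consistent option is that the element itself is a generator.
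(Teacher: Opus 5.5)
Your reduction to showing that no element has $X$-length exactly $2$ is sound, and your Step~1 is exactly the paper's first step. The genuine gap is Step~2. Everything after Step~1 rests on $X$ being closed under conjugation by its own elements, and this is never established.

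The sketch you offer for it has three problems:
\begin{itemize}
\item The induction on the $S$-length of the conjugator, comparing a palindromic geodesic $wzw^{-1}$ with words built from $r^2=s$ and $sts=tst$, never identifies two distinct equal-length words for any specific element.
\item It only treats conjugates of involutions, whereas you need $xyx^{-1}\in X$ for arbitrary $x,y\in X$, for instance $y=r$ or $y=rt$.
\item Item~(c) likewise gives no recipe for a second factorisation of $x^2$ when $x$ is an arbitrary element of $X$ of infinite order.
\end{itemize}
The structural problem is that you try to control every $g=xy$ with $x,y$ ranging over the \emph{unknown} set $X$. The only available tool is exhibiting two distinct words of the same length, and such words can only be built from elements already known to lie in $X$.

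The missing idea is to prove $g\in X$ directly, by induction on the number of syllables in the normal form of $g$. That way only specific, shorter elements are ever required to lie in $X$.

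\emph{Base case.} You need $(A\cup B)\setminus\{1\}\subseteq X$. Beyond $q=tst\in X$ from your Step~1, this requires $ts\in X$. The words $ts$, $sq$, $qt$ are distinct words of length $2$ representing the same element, so $|ts|_X\neq 2$, and hence $ts\in X$. This is your own Step~2 mechanism with $(x,y)=(t,s)$, and it is legitimate here precisely because $tst=q$ is already known to lie in $X$. Then $st=(ts)^{-1}\in X$ by inverse-closure.

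\emph{Inductive step.} Write $g=u_1u_2\cdots u_ns^{\epsilon}$ with $n\ge 2$, and set $x=u_1$, $y=u_2\cdots u_ns^{\epsilon}$. Then $x$ and $xs$ have one syllable, while $y$ and $sy=(su_2)u_3\cdots u_ns^{\epsilon}$ have $n-1$ syllables. So all four lie in $X$ by induction. Now $xy$ and $(xs)(sy)$ are two distinct words of length $2$ representing $g$, distinct because $x\neq xs$. Geodeticity therefore forbids $|g|_X=2$. Since $g\neq 1$ and $|g|_X\le 2$, we get $g\in X$.

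Your last paragraph gestures at this ``slide an $s$ across a syllable'' trick, but it is not carried out. It is also aimed at proving conjugation-closure rather than membership of each $g$ in $X$.
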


\begin{proof}
	Put
	$
	q=tst=sts.
	$
	The element $q$ has order $2$. Since $t,s\in X$, we have
	$
	|q|_X\leq3.
	$
	By Lemma~\ref{lem:involution-odd-length}, its $X$-length is odd.
	
	Suppose that $q\notin X$. Then $|q|_X\neq1$, and hence
	$
	|q|_X=3.
	$
	But
	$$
	tst
	\qquad\text{and}\qquad
	sts
	$$
	are two distinct words of length $3$ representing $q$, so both
	would be geodesic. This contradicts geodeticity. Therefore
	$
	q\in X
	$
	and $|q|_X=1$.
	
	Now put
	$
	a=ts.
	$
	Since $q=sts=tst$, we have
	$$
	a=ts=sq=qt.
	$$
	Thus $a$ has three distinct representatives of length $2$ over
	$X$. If $a\notin X$, then $|a|_X=2$, and all three displayed
	words would be geodesic. Therefore
	$
	ts\in X.
	$
	Since $X$ is inverse-closed,
	$$
	st=(ts)^{-1}\in X.
	$$
	
	Since
	$
	A=\{1,r,r^{-1},s\},
	$
	the assumption $S\subseteq X$ already gives
	$
	A\setminus\{1\}\subseteq X.
	$
	Moreover, we have proved that
	$
	t,s,q,ts,st\in X.
	$
	
	It follows that
	$$
	A\setminus\{1\}\subseteq X
	\qquad\text{and}\qquad
	B\setminus\{1\}\subseteq X.
	$$

	We now argue by induction on normal-form syllable length. Suppose
	that every nonidentity element whose normal form has fewer than
	$n$ syllables belongs to $X$, where $n\geq2$, and let
	$$
	g=u_1u_2\cdots u_ns^\epsilon,
	$$
	with $\epsilon \in \{0,1\}$ 
	be a normal form with $n$ syllables.
	
	Set
	$$
	x=u_1,
	\qquad
	y=u_2\cdots u_ns^\epsilon.
	$$
	The elements $x$ and $xs$ have normal-form syllable length one.
	Moreover, $y$ and $sy$ have normal-form syllable length $n-1$ (	notice that
	$
	sy=(su_2)u_3\cdots u_ns^\epsilon
	$
	is reduced because $s\in C$, so $su_2$ lies in the same factor as
	$u_2$, and $su_2\notin C$, since otherwise
	$
	u_2=s(su_2)\in C
	$).

	By the induction hypothesis,
	$
	x,  xs,  y,  sy \in X. 
	$
	Since $s^2=1$, then
	$$
	g=xy=(xs)(sy).
	$$
	These are two distinct words of length $2$ over $X$: their first
	letters are different because $x\neq xs$.
	
	If $g\notin X$, then $|g|_X=2$, so the two displayed words are
	distinct geodesic representatives of $g$, contradicting
	geodeticity. Hence
	$
	g\in X.
	$
	
	Therefore,
	$
	G\setminus\{1\}\subseteq X,
	$
	and, since $G$ is infinite, such an $X$ cannot be finite.
\end{proof}

Another question concerns finite convergent length-reducing rewriting
systems.  Madlener and Otto conjectured that the groups admitting such
presentations are precisely the plain groups \cite{MO}.  Since $G$ is
not plain, a positive answer to the following question would disprove
their conjecture. Elder and Piggott \cite[Theorem A]{EP} prove, in particular, that an
inverse-closed finite convergent length-reducing rewriting system gives
a geodetic Cayley graph.  Thus the generating set $S$ used above
cannot support such an inverse-closed system.

\begin{question}
	Does the group $G$ admit a presentation by a finite convergent length-reducing rewriting system?
\end{question}

\section*{Acknowledgements}
 The results in this paper were motivated in part by discussions with a commercial LLM, which suggested considering the group studied here as a possible counterexample to Conjecture~\ref{conjectureelderetal} and provided preliminary arguments concerning this example. The author verified the relevant claims independently and developed the more general results and obtained the proofs presented here. The author is solely responsible for the mathematical content of the paper. The author is grateful to Murray Elder for several helpful discussions which motivated further developments and alternative proofs, and to Kane Townsend for pointing out reference~\cite{EtgarLinial}.
The author  was supported by national funds through the Fundação para a Ciência e Tecnologia, FCT, under the project
UID/04674/2025.

 \end{document}